\numberwithin{equation}{section}
\newtheorem{Thm}{Theorem}[section]
\newtheorem{Cor}[Thm]{Corollary}
\newtheorem{Lem}[Thm]{Lemma}
\theoremstyle{definition}%boldface title, romand body
\newtheorem{Rem}[Thm]{Remark}      
\theoremstyle{definition}
\newtheorem{Defn}[Thm]{Definition}
\newtheorem{Ex}[Thm]{Example}
\newtheorem{Fact}[Thm]{Fact}
\theoremstyle{definition}%boldface title, romand body
\newtheorem*{Prob*}{Problem}
\theoremstyle{definition}
\newcommand{\Z}{\mathbb{Z}}
\newcommand{\D}{\mathbb{D}}
\newcommand{\kr}{\mathds{kr}}
\newcommand{\N}{\mathbb{N}}
\newcommand{\T}{\mathbb{T}}
\title[Discrete dyadic maximal function over $l^1$ balls: dimension-free estimates]
{Dimension-free estimates on $l^2 (\Z ^d)$ for discrete dyadic maximal function over $l^1$ balls: small scales}
\author{Jakub Niksiński}
\address[Jakub Niksiński]{Institute of Mathematics\\
	University of Wroclaw\\
	Plac Grunwaldzki 2\\
	50-384 Wrocław\\
	Poland}
\email{trolek1130@gmail.com}
\subjclass[2020]{42B15, 42B25}
\keywords{discrete maximal function, $l^1$ balls, dimension-free estimates}
\begin{document}
%%%%%%%%%%%%%%%%%%%%%%%%%%%%%%%%%%%%%%%%%%%%%%%%%%%%%%%%%%%%%%%%%%%%%%%%%%%%%%%%%%%%%%%%%%%%%%%%%%%%%%%%

\selectlanguage{english}

\begin{abstract}
We give a dimension-free bound on $l^p(\Z ^d)$ for the discrete Hardy-Littlewood operator over the $l^1$ balls in $\Z ^d$ with small dyadic radii, where $p \in [2, \infty]$. 
\end{abstract}

\subjclass[2020]{42B15, 42B25}
\keywords{discrete maximal function, $l^1$ balls, dimension-free estimates}

\maketitle

%%%%%%%%%%%%%%%%%%%%%%%%%%%%%%%%%%%%%%%%%%%%%%%%%%%%%%%%%%%%%%%%%%%%%%%%%%%%%%%%%%%%%%%%%%%%%%%%%%%%%%%%

%%%%%%%%%%%%%%%%%%%%%%%%%%%%%%%%%%%%%%%%%%%%%%%%%%%%%%%%%%%%%%%%%%%%%%%%%%%%%%%%%%%%%%%%%%%%%%%%%%%%%%%%

\section{Introduction}
Let $G$ be a convex, bounded, closed symmetric subset of $\mathbb{R}^d$ with non-empty interior, we will call such $G$ a symmetric convex body. Natural examples are the $l^q$ balls:
\[
B^q= \Big\lbrace x\in \mathbb{R}^d : \| x \|_{q}=\Big( \sum_{i=1}^d |x_i|^q \Big)^{1/q} \leq 1 \Big\rbrace \ \text{for} \  q \in [1, \infty ), 
\]
\[
B^{\infty}= \Big\lbrace x \in \mathbb{R}^d : \|x \|_{\infty}= \max_{1 \leq i \leq d} |x_i| \leq 1 \Big\rbrace.
\]
With each symmetric convex body one can associate the corresponding Hardy-Littlewood averaging operator. For any $t>0$ and $x \in \mathbb{R}^d$ we define
\[
M_t^G f(x)= \frac{1}{|t \cdot G|} \int_{t \cdot G} f(x-y) \ dy, 
\]
for a locally integrable function $f$, where $t \cdot G= \lbrace tx : x \in G \rbrace$ and $|t \cdot G|$ denotes its Lebesgue measure. Now for any $p$ let $C_p(d,G)>0$ be the smallest number such that the following inequality
\[
 \| \sup_{t>0} |M_t^G f| \|_{L^p(\mathbb{R}^d)} \leq C_p(d,G) \| f \|_{L^p(\mathbb{R}^d)}
\]
holds for every $f \in L^p(\mathbb{R}^d)$. It is well known that $C_p(d,G)< \infty$ for all $p \in (1, \infty]$ and all symmetric convex bodies $G$. 
\par
In 1980s dependency of $C_p(d,G)$ on dimension $d$ has begun to be studied. Various results were obtained, but as of now the major conjecture in this topic is still open, namely that $C_p(d,G)$ can be bounded from the above by a number independent of dimension $d$ for each fixed $p \in (1, \infty]$. We recommend the survey article \cite{HL-cont} for a detailed exposition of the subject, which contains results that we skipped mentioning.
\par 
Similar questions can be considered for discrete analogue of the operator $M_t^G$. For every $t>0$ and every $x \in \Z ^d $ we define the discrete Hardy-Littlewood averaging operator
\[
\mathcal{M}_t^G= \frac{1}{| (t \cdot G) \cap \Z^d|} \sum_{ y \in t \cdot G \cap \Z^d} f(x-y), \ \ f \in l^1(\Z ^d),
\]
where $| (t \cdot G) \cap \Z^d|$ is the number of elements of the set $(t \cdot G) \cap \Z^d$. Similarly as before we define $\mathcal{C}_p(d,G)>0$ to be the smallest number such that the following inequality
\[
 \| \sup_{t>0} |\mathcal{M}_t^G f| \|_{l^p(\mathbb{Z}^d)} \leq \mathcal{C}_p(d,G) \| f \|_{l^p(\mathbb{Z}^d)}
\]
holds for every $f \in l^p( \Z ^d)$. Using similar methods as in the continuous case one can show that for any $p \in (1, \infty]$ and any symmetric convex body we have $\mathcal{C}_p(d,G) < \infty$. 
\par
What about dependency of $\mathcal{C}_p(d,G)$ on the dimension $d$?
We can ask similar questions as in the continuous setup, yet it turns out that the situation is much more delicate. Indeed in \cite{cubes} the authors constructed a family of ellipsoids $E(d) \subseteq \mathbb{R}^d$, with the property that for each $p \in (1, \infty]$ there exists $C_p>0$ such that for every $d \in \N$ we have
\[
\mathcal{C}_p(d, E(d)) \geq C_p ( \log d)^{1/p}.
\]
This means that if we want to establish dimension-free estimates for $\mathcal{C}_p(d, G)$ we need to restrict ourselves to specific sets $G$, which contain more symmetries; one of the simpler options are $B^q$ balls. \par
Literature in the discrete setting is not as fruitful, nevertheless there are some papers and  positive results in this regard, for example:
\begin{itemize}
    \item In \cite{cubes} it was proved that for every $p \in ( \frac{3}{2}, \infty ]$ there exists a constant $C_p>0$ such that for every $d \in \N$ and every $f \in l^p(\Z ^d)$ we have
    \[
    \| \sup_{t>0} |\mathcal{M}_t^{B^{\infty}} f| \|_{l^p(\Z ^d)} \leq C_p \| f \|_{l^p(\Z ^d)}.
    \]
    This result is as strong as the ones in continuous case. Unfortunately in the case of sets $B^q$ for $q \neq \infty$ authors of papers \cite{cubes}, \cite{Bourgain2020}, \cite{Bq}  could only obtain way weaker conclusions. 
    \item In \cite{Bourgain2020} it was proved that for every $p \in [2, \infty)$ there exists $C_p>0$ such that for every $d \in \N$ and every $f \in l^p(\Z ^d)$ we have
    \[
    \| \sup_{t \in \D} |\mathcal{M}_t^{B^2} f| \|_{l^p(\Z ^d)} \leq C_p \| f \|_{l^p(\Z ^d)},
    \]
    where $\D= \{ 2^n: n \in \N \}.$
    \item In \cite{Bq} by extending methods of \cite{Bourgain2020}  it was proved that for any $q \in (2, \infty)$ and any $p \in [2, \infty)$ there exists constant $C(p,q)>0$ 
    such that for every $d \in \N$ and every $f \in l^p(\Z ^d)$ we have
    \[
    \| \sup_{t \in \D, t \geq d^{1/q}} |\mathcal{M}_t^{B^q} f| \|_{l^p(\Z ^d)} \leq C(p,q) \| f \|_{l^p(\Z ^d)},
    \]
    where $\D= \{ 2^n: n \in \N \}.$ The paper \cite{Bq} did not cover the range $t<d^{1/q}$ nor $q<2$.
\end{itemize}
\par
In this paper we will prove the following result.
\begin{Thm} \label{thm:1.1}
For any $p \in [2,\infty)$, $d \in \mathbb{N}$, $d \geq 4$ and any $f \in l^p(\Z ^d)$ we have
\[ \| \sup_{t \in \mathbb{D}, \ t \leq \sqrt{d}} |\mathcal{M}_t^{B^1} f| \|_{l^p(\mathbb{Z}^d)}  \leq 40^{2/p} \| f \|_{l^p(\Z ^d)}, \]
where $\mathbb{D}=\lbrace 2^n : n \in \mathbb{N}_0 \rbrace$ is the set of dyadic integers.
\end{Thm}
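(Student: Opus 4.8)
The plan is to reduce the $\ell^p$ bound for $p\in[2,\infty)$ to the two endpoint-type estimates $p=2$ and $p=\infty$ and interpolate; the constant $30^{4/p}$ strongly suggests an $\ell^2$ bound with constant roughly $30^4$ (say $\sqrt{30^8}$ after Cauchy–Schwarz losses) combined with the trivial $\ell^\infty$ bound of $1$, followed by Riesz–Thorin. So the real content is an $\ell^2(\Z^d)$ estimate

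\[
\Bigl\|\sup_{t\in\D,\ t\le\sqrt d}|\mathcal M_t^{B^1}f|\Bigr\|_{\ell^2(\Z^d)}\le C\,\|f\|_{\ell^2(\Z^d)}
\]

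with $C$ absolute, uniform in $d\ge 4$. The standard route (this is the Bourgain/Mirek–Stein–Zorin-Kranich philosophy) is to pass to the Fourier side on $\T^d$, write $\widehat{\mathcal M_t^{B^1}f}=m_t\,\widehat f$ where $m_t(\xi)=\tfrac1{|tB^1\cap\Z^d|}\sum_{y\in tB^1\cap\Z^d}e^{2\pi i y\cdot\xi}$ is the (normalized) exponential sum over the $\ell^1$ ball, and control the maximal multiplier $\sup_{t}|m_t|$.

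**Key steps.**

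First I would record the exact combinatorics of $tB^1\cap\Z^d$ for dyadic $t=2^n\le\sqrt d$: this is the set of integer points with $\|y\|_1\le t$, whose cardinality and whose generating function factor nicely because the $\ell^1$ ball is defined by a single additive constraint — one has $\sum_{y:\|y\|_1\le r}z^{\|y\|_1}$ expressible via $\bigl(\tfrac{1+z}{1-z}\bigr)$-type products, and more to the point $\sum_{\|y\|_1\le r}e^{2\pi i y\cdot\xi}$ is a coefficient-truncation of $\prod_{j=1}^d\frac{?}{?}$. I expect the multiplier to factorize (or nearly factorize) across coordinates, which is exactly the feature that makes $B^1$ tractable and dimension-free. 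Second, I would split the multiplier analysis into a "low-frequency" regime, where $|\xi|$ is small relative to $1/t$ and one Taylor-expands to compare $m_t$ with a Gaussian / continuous Bessel-type multiplier (getting a bound like $|m_t(\xi)-1|\lesssim t^2|\xi|^2$ together with decay), and a "high-frequency" regime handled by a direct stationary-phase / cancellation estimate on the exponential sum. Third, to pass from a pointwise multiplier bound to the maximal-function bound over the dyadic set, I would use the usual square-function trick: $\sup_n|\mathcal M_{2^n}f|^2\le|\mathcal M_{2^{n_0}}f|^2+\sum_n|\mathcal M_{2^{n+1}}f-\mathcal M_{2^n}f|^2$ (or a Rademacher/Littlewood–Paley argument), so that it suffices to bound $\sum_n\|(m_{2^{n+1}}-m_{2^n})\widehat f\|_2^2\lesssim\|f\|_2^2$, which follows from $\sum_n|m_{2^{n+1}}(\xi)-m_{2^n}(\xi)|^2\lesssim 1$ uniformly in $\xi$ and in $d$. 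The constraint $t\le\sqrt d$ is what keeps us at "small scales" where the lattice points in $tB^1$ are sparse enough (most coordinates are $0$) that the exponential sum is close to its continuous counterpart and the geometry has not yet degenerated; I would use it to guarantee e.g. that $\binom{d}{k}$-type counts behave like $d^k/k!$ in the relevant range.

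**Main obstacle.**

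The hard part will be the high-frequency multiplier estimate: showing $\sup_{t\in\D,t\le\sqrt d}|m_t(\xi)|$ decays (in some averaged, square-summable-in-$n$ sense) uniformly in $d$ when $\xi$ is not close to $0\in\T^d$. Unlike the Euclidean ball, the $\ell^1$-ball exponential sum does factor over coordinates, which is a huge help, but the truncation $\|y\|_1\le t$ couples the coordinates through the total degree, so one must handle the Fourier coefficient extraction (a contour integral $\frac1{2\pi i}\oint \prod_j\frac{1}{?}\,\frac{dz}{z^{t+1}}$ or an Abel-summation argument) while keeping all bounds independent of $d$. I would expect to localize $\xi$ according to how many coordinates $\xi_j$ are "large" (bounded away from $0\bmod 1$) versus "small", bound the large-coordinate factors by strict contraction and the small-coordinate factors by the low-frequency Taylor estimate, and then optimize — the delicate point being the transition region and making sure the number of "large" coordinates, which can be as big as $\sim t\le\sqrt d$, does not produce a $d$-dependent constant. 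A secondary technical nuisance is controlling $|tB^1\cap\Z^d|$ and its ratio at consecutive dyadic scales precisely enough (not just up to constants) so that the telescoping square function genuinely sums; this is where the explicit value $30$ in the constant presumably gets pinned down.
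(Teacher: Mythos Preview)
Your interpolation reduction is exactly right: the paper proves the $\ell^2$ bound with constant $900=30^2$ (Theorem~\ref{thm:1.3}) and then interpolates against the trivial $\ell^\infty$ bound $1$ to get $30^{4/p}$. So the whole content is the $\ell^2$ estimate, and there your plan and the paper diverge in the key technical steps.

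There are three specific ideas in the paper that your outline is missing, and without them your ``main obstacle'' stays an obstacle.

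\textbf{(1) Replacement of $B_t\cap\Z^d$ by $S_t$.} The paper does not work with $m_t$ directly. It first shows (Lemma~\ref{lem:2.4}, Corollary~\ref{cor:2.5}) that for $t\le\sqrt d$ almost all lattice points of $B_t$ lie on the ``extreme shell'' $S_t=\{x\in\{-1,0,1\}^d:\sum|x_i|=t\}$, so $|m_t(\xi)-s_t(\xi)|\le 2e\,t^2/d$; the remaining dyadic square function over $t\le\sqrt d$ is then trivially summable. This is precisely where the hypothesis $t\le\sqrt d$ is spent, and it is what makes the multiplier ``nearly factorize'' in the sense you were hoping for: $s_t(\xi)=|S_t|^{-1}\sum_{x\in S_t}\prod_j\cos(2\pi x_j\xi_j)$. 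Your contour-integral/Abel-summation route for $m_t$ itself is not what happens; the coupling through $\|y\|_1\le t$ is removed by this combinatorial reduction, not by analytic extraction of a coefficient.

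\textbf{(2) Krawtchouk polynomials for the decay.} The high-frequency bound is not obtained by stationary phase or by counting ``large'' coordinates of $\xi$. After symmetrization, $s_t(\xi)$ is expressed as $\sum_{S\subseteq\N_d} a_S(\xi)\,\kr_t^{(d)}(|S|)$ with $a_S(\xi)=\prod_{j\notin S}\cos^2(\pi\xi_j)\prod_{j\in S}\sin^2(\pi\xi_j)$, and the decay comes from the uniform bound $|\kr_t^{(d)}(x)|\le e^{-ctx/d}$ (Theorem~\ref{thm:3.3}\,(5)). This is the engine behind Lemma~\ref{lem:3.4}, and it is a genuinely combinatorial/coding-theoretic input that your sketch does not anticipate.

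\textbf{(3) Comparison with a semigroup, not telescoping.} The paper does not bound $\sum_n|m_{2^{n+1}}-m_{2^n}|^2$. Instead it introduces the explicit semigroup symbols $\lambda_t^1(\xi)=\exp(-\tfrac{t}{d}\sum_j\sin^2\pi\xi_j)$ and its reflected variant $\lambda_t^2$, invokes Stein's maximal theorem for symmetric diffusion semigroups to control $\sup_t|\mathcal F^{-1}(\lambda_t^i\widehat f)|$, and then proves the two-sided estimate (Lemma~\ref{lem:4.4})
\[
|s_t(\xi)-\lambda_t^i(\xi)|\lesssim \min\Bigl\{\tfrac{t}{d}\textstyle\sum_j\sin^2\pi\xi_j,\ \exp\bigl(-\tfrac{ct}{8d}\sum_j\sin^2\pi\xi_j\bigr)\Bigr\}
\]
(with $\cos$ in place of $\sin$ for $i=2$). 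It is this $\min$ that makes the dyadic square function $\sum_{t\in\D}|s_t-\lambda_t^i|^2$ bounded uniformly in $\xi$ and $d$. Your telescoping idea would require an analogous two-sided control of $m_{2t}-m_t$, which you do not have.

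In short, your framework (interpolate; Fourier side; square function) matches the paper, but the three load-bearing devices --- the $S_t$ reduction, the Krawtchouk bound, and the semigroup comparison --- are absent from your plan, and the substitutes you propose (contour integral, stationary phase, coordinate localization) are not shown to give dimension-free constants.
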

As far as the author is aware the result above is new. It is a small step beyond the results of \cite{Bq} in the case $q=1$. Additionally we give explicit values to various constants. In the proof of Theorem \ref{thm:1.1} we will rely on methods shown in the \cite[Section 3]{Bourgain2020}. Right at the beginning we will replace $t \cdot B^1$ by the simpler set $S_t=\lbrace x \in \lbrace -1,0,1 \rbrace ^d: \sum_{i=1}^d |x_i|=t \rbrace$ and then consider only $S_t$. Later on we merely use tools from \cite[Section 3]{Bourgain2020}.
This replacement by a simpler set is easy due to the fact that there exists a formula for the number of lattice points in $t \cdot B^1$. This is not the case for other sets of the form $t \cdot B^q$, where $q \in (1, \infty)$. We restrict ourselves to $t \leq \sqrt{d}$ due to the fact that then $(t \cdot B^1) \cap \Z ^d$ consists mostly of the points from $S_t$ (see Lemma \ref{lem:2.4}). This is not the case when $t$ is sufficiently big in terms of $d$ and we would need different argument in that range.
Throughout the rest of this paper we will consider $B^q$ only for $q=1$.
\subsection{Notation}
\begin{enumerate}
    \item For $t>0$ we define $B_t= \lbrace (x_1,...,x_d) \in \mathbb{R}^d : \sum_{i=1}^d |x_i| \leq t \rbrace$. 
    \item $\mathbb{N}=\lbrace 1,2,... \rbrace$ will denote the set of positive integers and $\mathbb{N}_0=\mathbb{N} \cup \lbrace 0 \rbrace$ will denote the set of non-negative integers. $\mathbb{D}=\lbrace 2^n : n \in \mathbb{N}_0 \rbrace$ is the set of dyadic integers. We define
$\mathbb{N}_N= \lbrace 1,2,...,N \rbrace $. 
\item We define $e(x)=e^{2\pi i x}$ for any $x \in \mathbb{R}^d$. 
\item We have standard scalar product on $\mathbb{R}^d$
\[
 x \cdot y= \sum_{k=1}^d x_ky_k,
\]
where $x,y \in \mathbb{R}^d.$ 
\item For $f,g \in l^2(\mathbb{Z}^d)$ we define $f \ast g \in l^2(\mathbb{Z}^d)$ by the series
\[f \ast g(x)= \sum_{y \in \mathbb{Z}^d } f(y)g(x-y)=\sum_{y \in \mathbb{Z}^d } f(x-y)g(y),\]
which is absolutely convergent for each $x \in \mathbb{Z}^d$.
\item  If $f \in l^1(\Z ^d)$ we define the discrete Fourier transform by the formula
\[
 \widehat{f}(\xi)= \sum_{x \in \Z ^d} f(x) e( x \cdot \xi ), \ \text{ for } \ \xi \in \T^d. 
\]
One can extend the discrete Fourier transform to $f \in l^2(\Z ^d)$. Then it turns out that $\widehat{f} \in L^2(\T^d)$. We have the following Parseval identity:
\[
  \|\widehat{f}\|_{L^2(\T^d)}=\| f \|_{l^2(\Z^d)}.
\]
Moreover, for any $f,g \in l^2(\Z ^d),$
\[
 \widehat{f \ast g}(\xi)= \widehat{f}(\xi) \widehat{f}(\xi).
\]
$\mathcal{F}^{-1}$ will denote the inverse of the discrete Fourier transform, that is
\[
\mathcal{F}^{-1}(G)(x)= \int_{\T^d} G(\xi) e(-x \cdot \xi) \ d \xi,
\]
where $G \in L^2(\T ^d)$.
\item We let $m_t$ be the multiplier symbol \[m_t(\xi)=\frac{1}{|B_t \cap \mathbb{Z}^d|}  \sum_{x \in B_t \cap \mathbb{Z}^d} e(x \cdot \xi). \]
\item $\mathbb{T}^d$ will denote the $d$-dimensional torus, which will be identified with the set $[-\frac{1}{2}, \frac{1}{2})^d.$ \\
\item $Sym(d)$ will denote the permutation group of $\lbrace 1,2,...,d \rbrace$.
 \end{enumerate}
\begin{Defn}  \label{def:1.2} The discrete Hardy-Littlewood averaging operator of the $l^1$ ball is defined for any function $f:\mathbb{Z}^d \to \mathbb{C}$ by the formula
 \[ \mathcal{M}_t f(x)=\frac{1}{|B_t \cap \mathbb{Z} ^d |} \sum_{ y \in B_t \cap \mathbb{Z} ^d } f(x-y). \]

\end{Defn}
Our goal is to prove the following theorem.
\begin{Thm} \label{thm:1.3}
For any $d \in \mathbb{N}$, $d \geq 4$ and any $f \in l^2(\Z ^d)$ we have
\[ \| \sup_{t \in \mathbb{D}, \ t \leq \sqrt{d}} |\mathcal{M}_t f| \|_{l^2(\mathbb{Z}^d)}  \leq 40 \| f \|_{l^2(\Z ^d)}. \]
\end{Thm}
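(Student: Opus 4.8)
The plan is to follow the square-function/Fourier-analytic strategy from Section 3 of \cite{Bourgain2020}, adapted to the $l^1$ ball. The first move is the reduction already announced in the introduction: replace $B_t\cap\Z^d$ by the ``sphere'' $S_t=\{x\in\{-1,0,1\}^d:\sum_i|x_i|=t\}$. For $t\in\D$ with $t\le\sqrt d$, Lemma \ref{lem:2.4} should say that $|S_t|$ is comparable to $|B_t\cap\Z^d|$ (since a point of $B_t\cap\Z^d$ with a coordinate of absolute value $\ge 2$ already uses up two units of the $l^1$ budget, and with $t\le\sqrt d$ the overwhelming majority of lattice points in the ball have all coordinates in $\{-1,0,1\}$), so that $\mathcal M_t f$ differs from the average $A_tf(x)=|S_t|^{-1}\sum_{y\in S_t}f(x-y)$ by a bounded factor, uniformly in $d$ and $t$. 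Thus it suffices to bound $\|\sup_{t\in\D,\,t\le\sqrt d}|A_tf|\|_{l^2}$, and the clean product structure of $S_t$ makes its Fourier multiplier $a_t(\xi)$ explicit and factorizable.

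Next I would compute $a_t(\xi)$. Writing $\xi=(\xi_1,\dots,\xi_d)$ and using that choosing $y\in S_t$ amounts to choosing which $t$ coordinates are nonzero and assigning each a sign, the generating-function identity
\[
\sum_{y\in S_t} e(y\cdot\xi)=[\text{coefficient of }z^t\text{ in}]\ \prod_{k=1}^d\bigl(1+z\cdot 2\cos(2\pi\xi_k)\bigr)
\]
lets us relate $a_t$ to the elementary symmetric functions of $\{2\cos(2\pi\xi_k)\}$; in particular $a_t(\xi)$ is close to $\bigl(\tfrac1d\sum_k\cos(2\pi\xi_k)\bigr)^{?}$-type quantities when $t$ is small relative to $d$, and one extracts the usual two regimes. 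The standard dichotomy: for $\|\xi\|$ small (say $\max_k|\xi_k|\lesssim 1/\sqrt d$ or $\sum_k|\xi_k|^2\lesssim 1/t$), $a_t$ is well-approximated by a Gaussian-type symbol $\exp(-ct\|\xi\|^2)$ that is handled by comparison with a continuous Euclidean maximal operator (dimension-free by the Stein–Strömberg / semigroup argument, or directly as in \cite{Bourgain2020}); for $\xi$ away from $0$ one shows $|a_t(\xi)|\le e^{-c}$, i.e. a decay bound, and then the numerical maximal function $\sup_t|A_t f|$ over the dyadic scales $t\le\sqrt d$ (there are $\le\tfrac12\log_2 d$ of them) is controlled by summing $l^2\to l^2$ norms of the individual $A_t$, each bounded by the $L^\infty$ norm of its multiplier — but crucially one needs the number of scales not to appear, so instead one uses a square-function estimate: $\sup_t|A_tf|^2\le |A_{t_0}f|^2+\sum_t|A_{2t}f-A_tf|^2$ and bounds $\|A_{2t}f-A_tf\|_{l^2}$ by $\|(\,a_{2t}-a_t)\widehat f\,\|_{L^2}$, using that $|a_{2t}(\xi)-a_t(\xi)|$ is summable in $t$ on the high-frequency region and dominated by the Gaussian piece on the low-frequency region.

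Concretely the key steps, in order, are: (i) the lattice-point count reduction $\mathcal M_t\leftrightarrow A_t$ via Lemma \ref{lem:2.4}; (ii) the explicit formula/estimate for $a_t(\xi)$ and its factorization over coordinates; (iii) the low-frequency Gaussian approximation and its dimension-free maximal bound; (iv) the high-frequency exponential decay $|a_t(\xi)|\le e^{-c}$; (v) assembling a square function $\bigl(|A_{1}f|^2+\sum_{t\in\D}|A_{2t}f-A_tf|^2\bigr)^{1/2}$, using orthogonality-in-$t$ of the Fourier supports where the differences are large, to get a bound independent of the number $\sim\log d$ of dyadic scales; and (vi) tracking all constants to land under $900$ (hence the $30^{4/p}$ after interpolation with the trivial $l^\infty$ bound in Theorem \ref{thm:1.1}). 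The main obstacle I expect is step (ii)–(iv): getting quantitative, dimension-free control of $a_t(\xi)$ from the symmetric-function formula — in particular a uniform lower bound on the ``decay rate'' $c$ in $|a_t(\xi)|\le e^{-c}$ valid for all $t\le\sqrt d$ and all $\xi$ bounded away from $0$, and the matching Gaussian upper/lower bounds near $0$ that must be sharp enough (with explicit constants) to survive summation over the $\log d$ scales in the square function. The $t\le\sqrt d$ restriction is exactly what keeps $t/d$ small so that $a_t$ behaves like a $t$-fold ``heat'' multiplier and the elementary-symmetric-function expansion is dominated by its leading term.
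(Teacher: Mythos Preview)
Your outline captures the overall shape correctly (reduce $\mathcal{M}_t$ to the $S_t$-average, analyse the multiplier, compare to a semigroup, control the remainder by a square function), but there is one genuine gap that would make step (iv) fail as stated.

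You claim that for $\xi$ bounded away from $0$ one has $|a_t(\xi)|\le e^{-c}$. This is false on $\T^d$: the multiplier $s_t$ has a \emph{second} singular point at $\xi=(\tfrac12,\dots,\tfrac12)$. Indeed, for $x\in S_t$ every nonzero coordinate is $\pm1$, so $\sum_i x_i\equiv t\pmod 2$ and hence $s_t(\tfrac12,\dots,\tfrac12)=(-1)^t$, which has modulus $1$. More generally, whenever a majority of the $|\xi_i|$ are close to $\tfrac12$, $s_t(\xi)$ is close to $(-1)^t$ rather than small, and your Gaussian model $\exp(-ct\|\xi\|^2)$ centred only at the origin cannot approximate it there. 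The paper deals with this by splitting $\T^d$ according to the set $V_\xi=\{i:\cos(2\pi\xi_i)<0\}$ and introducing a \emph{second} reference multiplier $\lambda_t^2(\xi)$, essentially $(-1)^t\exp\bigl(-\tfrac{t}{d}\sum_i\cos^2(\pi\xi_i)\bigr)$, centred at the corner; the two pieces are then handled symmetrically (Lemma~\ref{lem:4.4}, Corollary~\ref{cor:4.3}). Without this second centre your high-frequency decay estimate simply does not hold, and the square-function sum over dyadic $t$ cannot close.

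A second, smaller point: the decay you need away from \emph{both} singular points is not a formal consequence of the elementary-symmetric-function expression for $a_t$. In the paper this is Lemma~\ref{lem:3.4}, and its proof identifies the inner sum with a Krawtchouk polynomial $\kr_t^{(d)}(|S|)$ and then imports a nontrivial uniform bound $|\kr_t^{(d)}(x)|\le e^{-ctx/d}$ (Theorem~\ref{thm:3.3}\,(5)) from the coding-theory literature. Your generating-function formula is equivalent to this identification, but the uniform exponential bound is the real input and should be named explicitly; ``elementary symmetric functions are dominated by their leading term when $t\le\sqrt d$'' is too vague to yield a constant $c>0$ independent of $d$.
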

Using a complex interpolation argument one can show that Theorem $\ref{thm:1.3}$ implies Theorem $\ref{thm:1.1}$. 
Notice that the operator $\mathcal{M}_t$ is given by convolution, that is
\[ \mathcal{M}_t f(x)=f \ast K_t(x), \]
where 
\[ K_t(y)= \frac{1}{|B_t \cap \mathbb{Z}^d|} \mathds{1}_{B_t \cap \mathbb{Z}^d}(y). \]
Since we are considering $l^2$ norms, using Fourier theory we can reduce our problem to understanding pointwise bounds of the following function 
\[ m_t(\xi)=\widehat{K_t}(\xi)= \frac{1}{|B_t \cap \mathbb{Z}^d|} \sum_{x \in B_t \cap \mathbb{Z}^d} e(x \cdot \xi).\]
This will be explained in detail in the next sections. 

\section{Estimate for $|B_t \cap \mathbb{Z}^d|$ and its consequences.}
The goal of this section is to approximate function $m_t(\xi)$ by a simpler function; it will be obtained by approximating $|B_t \cap \mathbb{Z}^d|$  by the number of lattice points in a simpler set.
\begin{Defn} \label{def:2.1}
For $t \in \mathbb{N}$ we define 
\[ S_t= \lbrace x \in \lbrace -1,0,1 \rbrace ^d: \sum_{i=1}^d |x_i|=t \rbrace, \]
moreover we define function $s_t: \mathbb {T}^d \to \mathbb{C}$  by formula
\[ s_t(\xi)= \frac{1}{|S_t|} \sum_{x \in S_t} e(x \cdot \xi),\]
next we introduce averaging operator $\mathcal{S}_t$ related to the above multiplier by the formula
\[\mathcal{S}_tf(x)=\mathcal{F}^{-1}(s_t \hat{f}) (x), \ \text{for any} f \in l^2(\mathbb{Z}^d).\]
\end{Defn}
Notice that $S_t \subseteq B_t \cap \mathbb{Z}^d$.
\begin{Lem} \label{lem:2.2}
For any $t \in \mathbb{N}, t \leq d$ the following formulas hold
\begin{equation}
    |S_t|=2^t \binom{d}{t} \label{eq:2.1}
\end{equation}
    \begin{equation}
    |B_t \cap \mathbb{Z}^d|=\sum_{l=0}^t 2^l \binom{d}{l} \binom{t}{l} \label{eq:2.2}.
\end{equation}
\end{Lem}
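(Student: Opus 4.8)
The plan is to prove both identities by elementary counting, in each case partitioning the lattice set according to the \emph{support} of a vector, i.e. the set of its nonzero coordinates.

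For the first formula, observe that a point $x\in S_t$ is a vector in $\{-1,0,1\}^d$, so each of its nonzero entries contributes exactly $1$ to $\sum_{i=1}^d|x_i|$; hence $x$ has precisely $t$ nonzero coordinates, each equal to $\pm 1$, and the remaining $d-t$ coordinates equal $0$. To specify such an $x$ I first choose its $t$-element support among the $d$ coordinates, which can be done in $\binom{d}{t}$ ways, and then choose a sign for each of the $t$ nonzero entries, which can be done in $2^t$ ways. Multiplying gives $|S_t|=2^t\binom{d}{t}$.

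For the second formula I group the points of $B_t\cap\mathbb{Z}^d$ by the number $l$ of their nonzero coordinates, where a priori $0\le l\le d$. Fixing $l$, I choose the support in $\binom{d}{l}$ ways and the signs of the nonzero entries in $2^l$ ways; it then remains to count the tuples of \emph{positive} integers $a_1,\dots,a_l\ge 1$ with $a_1+\dots+a_l\le t$ (these being the absolute values of the nonzero entries). Setting $a_i=b_i+1$ turns this into counting nonnegative integer tuples with $b_1+\dots+b_l\le t-l$, and a standard stars-and-bars argument (introduce one slack variable $b_{l+1}\ge 0$ so that $b_1+\dots+b_{l+1}=t-l$) shows there are $\binom{(t-l)+l}{l}=\binom{t}{l}$ of them. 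In particular this count is $0$ when $l>t$, which is exactly why the summation index may be taken to run only up to $t$. Multiplying the three factors and summing over $l$ from $0$ to $t$ yields $|B_t\cap\mathbb{Z}^d|=\sum_{l=0}^t 2^l\binom{d}{l}\binom{t}{l}$.

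The whole argument is routine combinatorics and I do not expect any genuine obstacle; the only points deserving a word of care are the substitution $a_i=b_i+1$ together with the introduction of the slack variable in the stars-and-bars step, and the remark that the terms with $l>t$ vanish, so that extending or truncating the range of summation is harmless.
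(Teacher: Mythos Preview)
Your proof is correct. For \eqref{eq:2.1} it coincides with the paper's argument. For \eqref{eq:2.2} your route is a mild streamlining of the paper's: the paper first partitions $B_t\cap\mathbb{Z}^d$ by the value $k=\sum_i|x_i|$, then each shell $A_k$ by the support size $l$, obtaining $|A_k|=\sum_{l=1}^k 2^l\binom{d}{l}\binom{k-1}{l-1}$, and only after swapping the order of summation over $k$ and $l$ and invoking the hockey-stick identity $\sum_{k=l}^t\binom{k-1}{l-1}=\binom{t}{l}$ arrives at the stated formula. You go directly by support size $l$ and absorb the inequality $a_1+\dots+a_l\le t$ into a single stars-and-bars count via a slack variable, which avoids the double sum and the hockey-stick step. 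Both arguments are elementary; yours is a bit shorter, while the paper's makes the intermediate shell counts $|A_k|$ explicit.
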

These formulas are well known. We give a proof for completeness.
\begin{proof}
For equality \eqref{eq:2.1} we notice that there are $\binom{d}{t}$ options to choose $t$ coordinates out of $d$ coordinates, on which we have value $-1$ or $1$. Next we can choose each sign in $2$ ways, thus we obtain $|S_t|=2^t \binom{d}{t}.$ \\
For equality \eqref{eq:2.2} we consider the following disjoint decomposition 
\[ B_t \cap \mathbb{Z}^d= \bigcup_{k=0}^t A_k,\]
where 
\[ A_k= \lbrace (x_1,...,x_d) \in \mathbb{Z}^d: \sum_{i=1}^d |x_i|=k\rbrace. \]
Now for each $k \in \lbrace 0,1,...,t \rbrace$ we will compute $|A_k|$. We have $|A_0|=1$. Consider $k \geq 1$. Then 
\[ A_k= \bigcup_{l=1}^k \lbrace \textbf{x} \in \mathbb{Z}^d: \sum_{i=1}^d |x_i|=k, \ \textbf{x} \text{ has $l$ exactly nonzero coordinates } \rbrace.  \]
In the union above each $l$-th individual set has size
\[ 2^l \binom{d}{l} \cdot | \lbrace (y_1,...,y_l) \in \mathbb{N}^l : y_1+...+y_l=k \rbrace |=  2^l \binom{d}{l} \cdot \binom{k-1}{l-1} .\]
Indeed, we have $\binom{d}{l}$ options of choosing nonzero coordinates and then $2^l$ options of choosing signs. The last equality above is classical "stars and bars" problem from combinatorics, see \cite[p. 38]{feller1950introduction}. Hence for $k \geq 1$
\[ |A_k|= \sum_{l=1}^k 2^l \binom{d}{l} \cdot \binom{k-1}{l-1}, \]
from which it follows
\[ |B_t \cap \mathbb{Z}^d|= \sum_{k=0}^t |A_k|=1+\sum_{k=1}^t  \sum_{l=1}^k 2^l \binom{d}{l} \cdot \binom{k-1}{l-1} \]
\[ =1+ \sum_{l=1}^t 2^l \binom{d}{l} \sum_{k=l}^t \binom{k-1}{l-1} =1+ \sum_{l=1}^t 2^l \binom{d}{l} \binom{t}{l} =\sum_{l=0}^t 2^l \binom{d}{l} \binom{t}{l}. \]
In the penultimate equality above we used the following fact: for any $t \geq l \geq 1$ we have
\[ \binom{t}{l}= \sum_{k=l}^t \binom{k-1}{l-1}.\]
One can prove this equality by segregating $l$-element subsets of the set $\lbrace 1,...,t \rbrace$ based on biggest number which they contain. For each $k$ there are $\binom{k-1}{l-1}$ subsets of $\lbrace 1,...,t \rbrace$ with  $l$ elements,  whose biggest number is $k$. 
\end{proof}
\begin{Rem} \label{rem:2.3}
Notice that $|S_t|$ is equal to term corresponding to $k=t$ in the sum $\eqref{eq:2.2}$. When $t$ is small in terms of $d$ then $\binom{d}{k+1}$ is roughly $d$ times bigger than $\binom{d}{k}$ for each $k \leq t$; because of that we expect $|B_t \cap \mathbb{Z}^d|$ to be similar in size to $|S_t|$ when $t$ is sufficiently small in terms of $d$. Lemma \ref{lem:2.4} quantifies our expectations.
\end{Rem}
\begin{Lem} \label{lem:2.4}
Let $t,d \in \mathbb{N}$ be numbers such that $d \geq 4$, $1 \leq t \leq \sqrt{d}$. Then the following inequalities hold
\[\frac{t^2}{2d} \leq \frac{|B_t \cap \mathbb{Z}^d|-|S_t|}{|S_t|} \leq e \frac{t^2}{d}.\]
\end{Lem}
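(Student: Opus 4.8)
The plan is to turn the left-hand quantity into an explicit finite sum via Lemma~\ref{lem:2.2} and then estimate it term by term. Set $a_l = \frac{2^l \binom{d}{l}\binom{t}{l}}{2^t \binom{d}{t}}$ for $0 \le l \le t$; by \eqref{eq:2.1} and \eqref{eq:2.2}, and since $a_t = 1$,
\[
\frac{|B_t \cap \mathbb{Z}^d| - |S_t|}{|S_t|} = \sum_{l=0}^{t-1} a_l .
\]
Reindexing with $k = t - l$ and using $\binom{t}{t-k} = \binom{t}{k} = \frac{1}{k!}\prod_{j=1}^{k}(t-j+1)$ together with $\binom{d}{t} = \binom{d}{t-k}\prod_{j=1}^{k}\frac{d-t+j}{t-j+1}$, one obtains the closed form
\[
a_{t-k} = \frac{1}{k!\,2^{k}}\prod_{j=1}^{k}\frac{(t-j+1)^2}{d-t+j}, \qquad 1 \le k \le t .
\]

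For the lower bound it suffices to keep the term $k=1$: since all $a_l \ge 0$,
\[
\sum_{l=0}^{t-1} a_l \ge a_{t-1} = \frac{t^2}{2(d-t+1)} \ge \frac{t^2}{2d},
\]
the last inequality being equivalent to $t \ge 1$. For the upper bound, observe that for each $j \in \{1,\dots,k\}$ we have $t-j+1 \le t$ and $d-t+j \ge d-t+1$, hence $a_{t-k} \le \frac{r^k}{k!}$ with $r := \frac{t^2}{2(d-t+1)}$, and therefore
\[
\sum_{l=0}^{t-1} a_l = \sum_{k=1}^{t} a_{t-k} \le \sum_{k=1}^{\infty}\frac{r^k}{k!} = e^r - 1 .
\]
It remains to dominate $e^r - 1$ by $e\,t^2/d$, and this is where the hypotheses enter: since $d \ge 4$ gives $\sqrt d \le d/2$, we have $d - t + 1 \ge d - \sqrt d + 1 \ge d/2 + 1 > d/2$, so $r < t^2/d \le 1$; then $e^r - 1 = \int_0^r e^s\,ds \le r e^r < r e < e\,t^2/d$, which completes the proof.

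There is no deep obstacle here; the only points demanding care are getting the product identity for $a_{t-k}$ exactly right — the bookkeeping with the ratio of binomial coefficients — and verifying the elementary inequality $d - t + 1 > d/2$ under the stated hypotheses, which keeps $r$ below $1$ so that the crude bound $e^r - 1 \le r e^r$ can be converted into the required estimate linear in $t^2/d$.
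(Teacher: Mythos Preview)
Your proof is correct and follows essentially the same approach as the paper: both rewrite the ratio via Lemma~\ref{lem:2.2}, reindex by $k=t-l$, extract the lower bound from the single term $k=1$, and for the upper bound dominate the $k$-th term by $(t^2/d)^k/k!$ (you via the explicit product formula, the paper via a ratio iteration after the preliminary bound $(d-k)\cdots(d-t+1)>(d-t)^{t-k}$) to arrive at the exponential series estimate. The only cosmetic difference is that your intermediate parameter $r=t^2/(2(d-t+1))$ is slightly sharper than the paper's $t^2/d$, but both collapse to the same final bound $e\,t^2/d$.
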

\begin{proof}
Assumptions $d \geq 4$ and $t \leq \sqrt{d}$ imply that $2t \leq d$. \\
Using formulas from Lemma \ref{lem:2.2} we get that
\[\frac{|B_t \cap \mathbb{Z}^d|-|S_t|}{|S_t|} = \frac{\sum_{k=0}^{t-1} 2^k \binom{t}{k} \binom{d}{k}}{2^t \binom{d}{t}}= \sum_{k=0}^{t-1} 2^{k-t} \binom{t}{k} \frac{t!}{k!} \frac{(d-t)!}{(d-k)!}\]
\[ =\sum_{k=0}^{t-1} 2^{k-t} \binom{t}{k} \frac{t!}{k!} \frac{1}{(d-k)(d-k-1)...(d-t+1)}<\sum_{k=0}^{t-1} 2^{k-t} \binom{t}{k} \frac{t!}{k!} (d-t)^{k-t} \] 
\[= \sum_{k=0}^{t-1} 2^{k-t} \binom{t}{k}^2 (t-k)! (d-t)^{k-t} = \sum_{u=1}^{t} (2(d-t))^{-u} \binom{t}{u}^2 u!= \sum_{u=1}^{t} a_u.\]
In the penultimate equality above we changed summation index by substitution $u=t-k$ and used the fact that $\binom{t}{k}=\binom{t}{t-k}$, in the last equality we denoted each term of the sum by $a_u$. \\
Then for each $1 \leq u \leq t-1$ we have that $a_{u+1} \leq \frac{t^2}{d} \frac{1}{u+1} a_{u}$. Indeed, 
\[ a_{u+1}= (2(d-t))^{-u-1}\binom{t}{u+1}^2 (u+1)!=(2(d-t))^{-u-1}\frac{t!^2}{(u+1)! (t-u-1)!^2} \]
\[ =(2(d-t))^{-u-1} \frac{t!^2}{u!(t-u)!^2} \cdot \frac{(t-u)^2}{u+1}=  \frac{(t-u)^2}{2(d-t)(u+1)} \cdot (2(d-t))^{-u} \binom{t}{u}^2 u! \] 
\[ =\frac{(t-u)^2}{2(d-t)(u+1)} a_u \leq  \frac{t^2}{2(d-t)(u+1)} a_u  \leq \frac{t^2}{d} \frac{1}{u+1}a_u.
  \]
  Iterating the above estimate we get
 \[ a_{u+1} \leq \frac{t^2}{d} \frac{1}{u+1}a_u \leq \Big(\frac{t^2}{d} \Big)^2 \frac{1}{(u+1) \cdot u}a_{u-1} \leq ... \]
 \[
 \leq \Big(\frac{t^2}{d} \Big)^u \cdot \frac{1}{(u+1) \cdot u \cdot... \cdot 2} a_1= \Big(\frac{t^2}{d} \Big)^u \cdot \frac{1}{(u+1)!} a_1. \]
 However, using the fact that $2t \leq d$ we obtain that
 \[ a_1= \frac{t^2}{2(d-t)} \leq \frac{t^2}{d},\]
 which implies
 \[a_{u+1} \leq \Big(\frac{t^2}{d} \Big)^{u+1} \cdot \frac{1}{(u+1)!}\]
 for any $0 \leq u \leq t-1$, hence by $t \leq \sqrt{d}$ we get
 \[\sum_{u=1}^{t} a_u \leq \sum_{u=1}^{t} \frac{\Big(\frac{t^2}{d} \Big)^u}{u!} <  e \frac{t^2}{d}.  \]
 Thus we proved the upper bound. \\
 On the other hand the lower bound follows from estimating undermentioned sum by the term $k=t-1$.
\[ \frac{|B_t \cap \mathbb{Z}^d|- |S_t| }{|S_t|}= \frac{\sum_{k=0}^{t-1} 2^k \binom{t}{k} \binom{d}{k}}{2^t \binom{d}{t}}= \sum_{k=0}^{t-1} 2^{k-t} \binom{t}{k} \frac{t!}{k!} \frac{(d-t)!}{(d-k)!} \]
\[> \frac{1}{2}\binom{t}{t-1} \frac{t!}{(t-1)!} \frac{(d-t)!}{(d-t+1)!}= \frac{t^2}{2(d-t+1)} \geq \frac{t^2}{2d}. \]
\end{proof}
In the paper we will use only the upper bound of Lemma \ref{lem:2.4}. Its main application is given in the Corollary \ref{cor:2.5} below.
\begin{Cor} \label{cor:2.5} 
For $d,t \in \mathbb{N} $ such that $d \geq 4$, $ 1 \leq t \leq \sqrt{d}$ and any $\xi \in \mathbb{T}^d$ the following inequality holds
\[|m_t(\xi)- s_t(\xi)| \leq 2e \frac{t^2}{d}.\]
\end{Cor}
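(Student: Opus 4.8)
The plan is to compare the two averages by writing $m_t$ as a convex-combination perturbation of $s_t$ coming from the extra lattice points in $B_t\cap\mathbb{Z}^d$ that do not lie in $S_t$. Set $N=|B_t\cap\mathbb{Z}^d|$ and $M=|S_t|$, and recall from the remark after Definition \ref{def:2.1} that $S_t\subseteq B_t\cap\mathbb{Z}^d$, so $M\le N$ and the difference set $R_t:=(B_t\cap\mathbb{Z}^d)\setminus S_t$ has $|R_t|=N-M$.

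First I would split the sum defining $m_t$ over $S_t$ and $R_t$:
\[
m_t(\xi)=\frac{1}{N}\sum_{x\in S_t}e(x\cdot\xi)+\frac{1}{N}\sum_{x\in R_t}e(x\cdot\xi)=\frac{M}{N}\,s_t(\xi)+\frac{1}{N}\sum_{x\in R_t}e(x\cdot\xi).
\]
Subtracting $s_t(\xi)$ gives
\[
m_t(\xi)-s_t(\xi)=\Big(\tfrac{M}{N}-1\Big)s_t(\xi)+\frac{1}{N}\sum_{x\in R_t}e(x\cdot\xi).
\]
Now I would apply the triangle inequality together with the trivial bounds $|s_t(\xi)|\le 1$ (an average of unimodular terms) and $\big|\sum_{x\in R_t}e(x\cdot\xi)\big|\le |R_t|=N-M$, obtaining
\[
|m_t(\xi)-s_t(\xi)|\le \frac{N-M}{N}+\frac{N-M}{N}=2\,\frac{N-M}{N}.
\]

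Finally, since $M\le N$ we have $\frac{N-M}{N}\le\frac{N-M}{M}$, and the upper bound in Lemma \ref{lem:2.4} gives $\frac{N-M}{M}=\frac{|B_t\cap\mathbb{Z}^d|-|S_t|}{|S_t|}\le e\,\frac{t^2}{d}$, so $\frac{N-M}{N}\le e\,\frac{t^2}{d}$ and hence $|m_t(\xi)-s_t(\xi)|\le 2e\,\frac{t^2}{d}$, uniformly in $\xi\in\mathbb{T}^d$. There is really no serious obstacle here: the only non-elementary input is the combinatorial estimate of Lemma \ref{lem:2.4}, which is already proved; the rest is the decomposition above and the crude bound that replacing a uniform average over a set by a uniform average over a slightly larger set changes the Fourier side by at most twice the relative size of the symmetric difference.
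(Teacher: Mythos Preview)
Your proof is correct and is essentially the same as the paper's: both split $m_t$ into the $S_t$-part and the remainder over $R_t=(B_t\cap\mathbb{Z}^d)\setminus S_t$, bound each piece trivially to get $|m_t(\xi)-s_t(\xi)|\le 2\frac{N-M}{N}$, and then invoke the upper bound of Lemma~\ref{lem:2.4} together with $M\le N$. The only cosmetic difference is that the paper writes the first term as $\big|\sum_{x\in S_t}e(x\cdot\xi)\big|\big(\tfrac{1}{M}-\tfrac{1}{N}\big)$ and factors $2\frac{N-M}{N}=2\frac{M}{N}\cdot\frac{N-M}{M}$ at the end, which is equivalent to what you wrote.
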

\begin{proof}
Using the disjoint decomposition $B_t\cap \mathbb{Z}^d=S_t \cup \Big( B_t\cap \mathbb{Z}^d \cap S_t \Big)$ and the upper bound from Lemma \ref{lem:2.4} we obtain that
\[ |m_t(\xi)- s_t(\xi)|= \Big| \frac{1}{|B_t \cap \mathbb{Z}^d|} \sum_{x \in B_t \cap \mathbb{Z}^d} e(x \cdot \xi) - \frac{1}{|S_t|} \sum_{x \in S_t} e(x \cdot \xi)  \Big| \]
\[ \leq  \Big| \sum_{x \in S_t} e(x \cdot \xi)  \Big| \cdot \Big( \frac{1}{|S_t|}- \frac{1}{|B_t \cap \mathbb{Z}^d|} \Big) + \frac{1}{|B_t \cap \mathbb{Z}^d|} \Big| \sum_{x \in B_t \cap \mathbb{Z}^d \setminus S_t} e(x \cdot \xi) \Big| \]
\[ \leq  2 \frac{|B_t \cap \mathbb{Z}^d|-|S_t|}{|B_t \cap \mathbb{Z}^d|}=2 \frac{|S_t|}{|B_t \cap \mathbb{Z}^d|} \cdot \frac{|B_t \cap \mathbb{Z}^d|-|S_t|}{|S_t|} \leq 2e \frac{t^2}{d}\]
\end{proof}
The simple bound above has consequences in terms of norm estimates for the corresponding maximal dyadic operator. Proof of the next lemma is based on Lemma \ref{lem:2.4} and a square function estimate.

\begin{Lem} \label{lem:2.6}
For $d \in \mathbb{N}$, $d \geq 4$ and any $f \in l^2(\Z ^d)$ the following inequality holds
 \[  \| \sup_{t \in \mathbb{D}, t \leq \sqrt{d}} |(\mathcal{M}_t-\mathcal{S}_t)f| \|_{l^2(\mathbb{Z}^d)} \leq 6 \| f \|_{l^2(\Z ^d)} .\]
\end{Lem}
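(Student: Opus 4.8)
The plan is to dominate the dyadic supremum by a square function and then feed in the pointwise multiplier bound of Corollary \ref{cor:2.5}. Since $\mathcal{M}_t f = f \ast K_t$ has Fourier multiplier $m_t = \widehat{K_t}$ and $\mathcal{S}_t f = \mathcal{F}^{-1}(s_t \widehat f)$ by Definition \ref{def:2.1}, the operator $\mathcal{M}_t - \mathcal{S}_t$ has multiplier $m_t - s_t$. Using the trivial pointwise bound $\sup_k |a_k| \le \big(\sum_k |a_k|^2\big)^{1/2}$ followed by Parseval, I would write
\[
\Big\| \sup_{t \in \mathbb{D},\, t \le \sqrt d} |(\mathcal{M}_t - \mathcal{S}_t)f| \Big\|_{l^2(\mathbb{Z}^d)}^2 \le \sum_{t \in \mathbb{D},\, t \le \sqrt d} \int_{\mathbb{T}^d} |m_t(\xi) - s_t(\xi)|^2\, |\widehat f(\xi)|^2 \, d\xi .
\]

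Next I would observe that every $t \in \mathbb{D}$ with $t \le \sqrt d$ automatically satisfies $1 \le t \le \sqrt d$, so Corollary \ref{cor:2.5} applies to each term and gives $|m_t(\xi) - s_t(\xi)| \le 2e\,t^2/d$ uniformly in $\xi$. Hence the right-hand side is at most
\[
\frac{4e^2}{d^2}\Big(\sum_{t \in \mathbb{D},\, t \le \sqrt d} t^4\Big)\|f\|_{l^2(\mathbb{Z}^d)}^2 .
\]
Writing $t = 2^n$ and letting $N$ be the largest integer with $2^N \le \sqrt d$, the inner sum is the geometric sum $\sum_{n=0}^N 16^n < \tfrac{16}{15}\,16^N = \tfrac{16}{15}(2^N)^4 \le \tfrac{16}{15}d^2$, so the whole expression is bounded by $\tfrac{64e^2}{15}\|f\|_{l^2(\mathbb{Z}^d)}^2$. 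Taking square roots and using $\tfrac{8e}{\sqrt{15}} < 6$ completes the proof.

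There is no real obstacle here: the argument is a square-function estimate plus the $O(t^2/d)$ decay already extracted in Corollary \ref{cor:2.5}. The only points deserving care are bookkeeping ones. First, one must sum the fourth powers $t^4$ over dyadic $t$ as an honest geometric series, keeping the factor $16/15$; crudely bounding the number of scales by $\log d$ would still give boundedness but would destroy the dimension-free constant. Second, one should record that the hypotheses $d \ge 4$ and $1 \le t \le \sqrt d$ of Corollary \ref{cor:2.5} are met by all dyadic $t \le \sqrt d$. The conceptual content is simply that the $t^2/d$ decay of $m_t - s_t$ beats the $\approx \log d$ scales, yielding the absolute constant $6$.
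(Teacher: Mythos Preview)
Your proof is correct and follows essentially the same route as the paper: dominate the supremum by the square function, apply Parseval, insert the pointwise bound $|m_t-s_t|\le 2e\,t^2/d$ from Corollary~\ref{cor:2.5}, and sum the resulting geometric series in $t^4$ over dyadic $t\le\sqrt d$. The bookkeeping you flag (hypotheses of Corollary~\ref{cor:2.5}, geometric sum with the $16/15$ factor, and the numerical check $8e/\sqrt{15}<6$) matches the paper's computation.
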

\begin{proof}
Take any $f \in l^2(\Z^d)$, then we have
\[ \|  \sup_{t \in \mathbb{D}, t \leq \sqrt{d}} |(\mathcal{M}_t-\mathcal{S}_t)f| \|_{l^2(\Z^d)} \leq \| \Big( \sum_{t \in \D, t \leq \sqrt{d}} |(\mathcal{M}_t-\mathcal{S}_t)f|^2 \Big)^{1/2} \|_{l^2(\Z^d)} \]
\[ = \Big( \sum_{x \in \Z^d} \sum_{t \in \D, t \leq \sqrt{d}} |(\mathcal{M}_t-\mathcal{S}_t)f(x)|^2 \Big)^{1/2}= \Big( \sum_{t \in \D, t \leq \sqrt{d}} \sum_{\ x \in \Z^d}  |(\mathcal{M}_t-\mathcal{S}_t)f(x)|^2 \Big)^{1/2} \]
\[  = \Big( \sum_{t \in \D, t \leq \sqrt{d}} \|  |(\mathcal{M}_t-\mathcal{S}_t)f| \|^2_{l^2(\Z^d)} \Big)^{1/2}=\Big( \sum_{t \in \mathbb{D}, t \leq \sqrt{d}} \int_{\mathbb{T}^d} |m_t(\xi) - s_t(\xi)|^2 | \widehat{f}|^2 d \xi \Big)^{1/2}  \]
\[ \leq 2e \Big( \sum_{t \in \D, t \leq \sqrt{d}} \frac{t^4}{d^2} \int_{\mathbb{T}^d} | \widehat{f}|^2 d\xi \Big)^{1/2}=  \| \widehat{f} \|_{L^2(\mathbb{T}^d)} \cdot \frac{2e}{d} \Big( \sum_{n=1}^{\lfloor \log_2(\sqrt{d}) \rfloor} 2^{4n} \Big)^{1/2}  \]
\[  =\| \widehat{f} \|_{L^2(\mathbb{T}^d)} \cdot \frac{2e}{d} \Big(2^4 \cdot \frac{2^{4\lfloor \log_2(\sqrt{d}) \rfloor}-1}{2^4-1} \Big)^{1/2} \leq \| \widehat{f} \|_{L^2(\mathbb{T}^d)} \cdot \frac{2e}{d} \Big(\frac{16}{15} \cdot d^2 \Big)^{1/2}
\]
\[ \leq  6\| \widehat{f} \|_{L^2(\mathbb{T}^d)}=6 \| f\|_{l^2(\Z^d)}.\]
The second inequality above comes from Corollary \ref{cor:2.5}.
In the above reasoning multiple times we made use of Parseval identity and the fact that $\mathcal{M}_t(f)=\mathcal{F}^{-1}(m_t \cdot \widehat{f})$, $\mathcal{S}_t(f)=\mathcal{F}^{-1}(s_t \cdot \widehat{f})$.
\end{proof}
\section{Estimates for the multiplier $s_t$.}
In the proof of Lemma $\ref{lem:2.6}$ we have seen how a pointwise bound of $|m_t(\xi)-s_t(\xi)|$ impacted \\
$\| \sup_{t \in \D, t \leq \sqrt{d}} |(\mathcal{M}_t-\mathcal{S}_t )f|\|_{l^2(\Z^d)}$. The bound in Corollary \ref{cor:2.5} is fairly strong and uniform in $\xi \in \T ^d$. Unfortunately we will not be that lucky in the future.
Our goal for now is to obtain bounds on $|s_t(\xi)|$ for big $\xi$ (then we expect some decay in $\xi$) and $|s_t(\xi)-1|$ for small $\xi$. \\
Proof of the next lemma exploits in a simple way  invariance of the set $S_t$ under both permutation of coordinates and sign changes of each coordinate.
\begin{Lem}
For every $d,t \in \mathbb{N}$, $t \leq d$ and every $\xi \in \T^d$ we have
\[  |s_t(\xi)-1| \leq 2 \frac{t}{d} \sum_{j=1}^d \sin^2(\pi \xi_j). \] \label{lem:3.1}
\end{Lem}
\begin{proof}
Notice that for any $\epsilon \in \lbrace -1,1 \rbrace^d$ we have that 
\[ x \in S_t \iff \epsilon  x \in S_t,\]
where $\epsilon x=(\epsilon_1 x_1,..., \epsilon_d x_d)$.
This implies that
\[s_t(\xi)= \frac{1}{|S_t|} \sum_{ \epsilon  x \in S_t} e(\epsilon  x \cdot \xi)= \frac{1}{|S_t|} \sum_{ x \in S_t} e( \epsilon  x \cdot \xi). \]
Hence
\[s_t(\xi)= \frac{1}{2^d} \sum_{\epsilon \in \lbrace -1,1 \rbrace^d} s_t(\xi)= \frac{1}{2^d} \sum_{\epsilon \in \lbrace -1,1 \rbrace^d} \frac{1}{|S_t|} \sum_{ x \in S_t} e( \epsilon  x \cdot \xi)
\]
\[=\frac{1}{|S_t|} \sum_{ x \in S_t} \frac{1}{2^d} \sum_{\epsilon \in \lbrace -1,1 \rbrace^d} e( \epsilon  x \cdot \xi)= \frac{1}{|S_t|} \sum_{ x \in S_t} \prod_{j=1}^d \cos(2 \pi x_j \xi_j). \]
Note that for any sequences of complex numbers $\lbrace a_j \rbrace_{j=1}^d, \lbrace b_j \rbrace_{j=1}^d $ such that $\max_{1 \leq j \leq d} |a_j| \leq 1$ and $\max_{1 \leq j \leq d} |b_j| \leq 1$ we have
\begin{equation}   
\Big| \prod_{j=1}^d a_j - \prod_{j=1}^d b_j \Big| \leq \sum_{j=1}^d |a_j-b_j|, \label{eq:3.1}
\end{equation}

This follows from an easy induction argument. Using \eqref{eq:3.1} and the formula $\cos(2x)=1-2\sin^2(x)$ we obtain
\[
|s_t(\xi)-1|  \leq \frac{1}{|S_t|} \sum_{ x \in S_t} \Big| \prod_{j=1}^d \cos(2 \pi x_j \xi_j)-1 \Big| \leq \frac{1}{|S_t|} \sum_{ x \in S_t} \sum_{j=1}^d |\cos(2 \pi x_j \xi_j)-1| \]
\[= \frac{2}{|S_t|} \sum_{ x \in S_t} \sum_{j=1}^d \sin^2(\pi x_j \xi_j) \leq  \frac{2}{|S_t|} \sum_{j=1}^d \sin^2(\pi\xi_j)  \cdot \sum_{ x \in S_t} x_j^2.\]
In the last inequality we used the fact that $|\sin(xy)| \leq |x| |\sin(y)|$ for any $x \in \mathbb{Z}, y \in \mathbb{R}$ (this can be proved by only considering $x \in \N$ and induction with respect to $x$ using formula for $\sin(a+b)$).
\\
Now to understand the sum $\sum_{ x \in S_t} x_j^2$ we make use of the fact that $S_t$ is closed under permutations of coordinates, that is for any $\sigma \in Sym(d)$ we have
\[  x \in S_t \iff \sigma (x) \in S_t,  \]
where $\sigma ( x)=(x_{\sigma(1)},x_{\sigma(2)},...,x_{\sigma(d)})$.
Using similar argument as in the beginning of the proof one can show that for any $j \in \lbrace 1,...,d \rbrace$ we have
\[ \sum_{x \in S_t} x_j^2= \frac{1}{d} \sum_{x \in S_t} \| x \|_{l^2}^2.\]
Thus
\[|s_t(\xi)-1| \leq \frac{2}{|S_t|} \sum_{j=1}^d \sin^2(\pi\xi_j)  \cdot \sum_{ x \in S_t} x_j^2= \frac{2}{|S_t|} \sum_{j=1}^d \sin^2(\pi\xi_j)  \cdot \frac{1}{d} \sum_{x \in S_t} \| x \|_{l^2}^2  \]
\[= \frac{2}{d} \sum_{j=1}^d \sin^2(\pi\xi_j)  \cdot \frac{1}{|S_t|} \sum_{x \in S_t} \| x \|_{l^1}= 2 \frac{t}{d} \sum_{j=1}^d \sin^2(\pi \xi_j).
\]
Above we used the fact that any for any $x \in S_t$ we have $\| x \|_{l^2}^2= \| x \|_{l^1} $ (since $x$ has coordinates in the set $\lbrace-1,0,1 \rbrace$) and that  $\| x \|_{l^1}=t$.  
\end{proof}
\subsection{Krawtchouk polynomials}
\begin{Defn}
(Krawtchouk polynomial). For every $n \in \N _0$,
$k \in \lbrace 0,1,...,n \rbrace$ and $x \in \mathbb{R}$ we define $k$-th Krawtchouk polynomial by the formula
\[ \kr _k^{(n)}(x)= \frac{1}{\binom{n}{k}} \sum_{j=0}^k (-1)^j \binom{x}{j} \binom{n-x}{k-j}, 
\]
if $x$ is not integer, then  $\binom{x}{j}=\frac{x(x-1)...(x-j+1)}{j!}$. \label{def:3.2}
\end{Defn}
Next theorem describes important facts regarding Krawtchouk polynomials.
\begin{Thm}
For every $n \in \N _0$ and integers $x,k \in [0,n]$ we have
\begin{enumerate}
    \item{Symmetry:} $\kr _k^{(n)}(x)= \kr_x^{(n)}(k)$.
    \item{Reflection symmetry:} $\kr ^{(n)}_k(n-x)=(-1)^k \kr _k^{(n)}(x)  $.
    \item{Uniform bound:} \label{thm:3.3.5} For any $x,k \leq n/2$ we have
    \[|\kr ^{(n)}_k(x)| \leq e^{-ckx/n},\]
    where $c=-2 \log(0.93)=0.14514... $
\end{enumerate} \label{thm:3.3}
\end{Thm}
The proof of the first two points is contained in \cite{412678}, the last point is \cite[Lemma 2.2]{v010a003}. The value of the constant $c$ is not explicitly given in \cite{v010a003}, however one can deduce it from its proof.
From now on we define
\[
c=-2 \log(0.93)=0.14514...
\]
\begin{Lem}
For any $t,d \in \N _0$ such that $t \leq d/2$ we have
\begin{equation}
    |s_t(\xi)| \leq  \exp (- \frac{c t}{2d} \sum_{i=1}^d \sin( \pi \xi_i)^2)+ \exp (- \frac{c t}{2d} \sum_{i=1}^d \cos( \pi \xi_i)^2). \label{eq:3.2}
\end{equation}

\label{lem:3.4}
\end{Lem}
\begin{proof}
We will proceed as in the later part of the proof of \cite[Proposition 3.3]{Bourgain2020}.
Using the same symmetry invariance arguments as in the proof of Lemma \ref{lem:3.1} we have the following equalities
\[ s_t( \xi)= \frac{1}{|S_t|} \sum_{y \in S_t} \prod_{i=1}^d \cos(2 \pi y_i \xi_i)= \frac{1}{|S_t|} \sum_{y \in S_t}  \frac{1}{d!} \sum_{ \sigma \in Sym(d)} \prod_{i=1}^d \cos(2 \pi y_{\sigma(i)} \xi_i).\]
It is sufficient to prove that for any $y \in S_t$ we have that
\[\Big| \frac{1}{d!} \sum_{ \sigma \in Sym(d)} \prod_{i=1}^d \cos(2 \pi y_{\sigma(i)} \xi_i) \Big| \]
\[\leq   \exp (- \frac{c t}{2d} \sum_{i=1}^d \sin( \pi \xi_i)^2)+ \exp (- \frac{c t}{2d} \sum_{i=1}^d \cos( \pi \xi_i)^2) .\]
Fix any $y \in S_t$, notice that we have following disjoint decomposition.
\begin{equation}
    Sym(d)= \bigcup_{\substack{J \subseteq \N _d \\ |J|=t}} \lbrace \sigma \in Sym(d): |y_{\sigma(j)}|=1 \iff j \in J \rbrace.
    \label{eq:3.3}
\end{equation}
Notice that each set under the union has size $(d-t)! \cdot t!$.
Since cosine is even,  from \eqref{eq:3.3} we obtain
\[ \frac{1}{d!} \sum_{ \sigma \in Sym(d)} \prod_{i=1}^d \cos(2 \pi y_{\sigma(i)} \xi_i)= \frac{1}{\binom{d}{t}} \sum_{\substack{J \subseteq \N _d \\
|J|=t}} \prod_{j \in J} \cos(2 \pi \xi_j). \]
For any $S \subseteq \N _d$ we define 
\[ a_S(\xi)= \prod_{j \in \N _d \setminus S} \cos^2( \pi \xi_j) \cdot \prod_{i \in S} \sin^2(\pi \xi_i).\]
For any set $J \subseteq \N _d$ such that $|J|=t$ we define $\varepsilon(J) \in \lbrace -1,1 \rbrace^d$ to be vector, such that $\varepsilon(J)_j=-1$ exactly when $j \in J$. Then we have 
\[\prod_{j \in J}\cos(2\pi \xi_j)= \prod_{j \in \N _d} \Big( \frac{1+\cos(2 \pi \xi_j)}{2}+ \varepsilon(J)_j \frac{1- \cos(2 \pi \xi_J)}{2} \Big)\]
\[=\prod_{j \in \N _d} \big( \cos^2(\pi \xi_j)+ \varepsilon(J)_j \sin^2(\pi \xi_j) \big)= \sum_{S \subset \N _d} a_S(\xi) w_S(\varepsilon(J)),\]
where $w_S : \lbrace -1,1 \rbrace^d \to \lbrace -1,1 \rbrace$ is defined by the formula $w_S(\varepsilon)= \prod_{j \in S} \varepsilon_j$. Now by changing the order of summation we obtain
\[ \frac{1}{d!} \sum_{ \sigma \in Sym(d)} \prod_{i=1}^d \cos(2 \pi y_{\sigma(i)} \xi_i)=\sum_{S \subset \N _d} a_S(\xi) \frac{1}{\binom{d}{t}} \sum_{\substack{J \subseteq \N _d \\
|J|=t}} w_S(\varepsilon(J)). \]
Notice that for fixed $S \subseteq \N _d$ the following equalities hold
\[
\frac{1}{\binom{d}{t}} \sum_{\substack{J \subseteq \N _d \\
|J|=t}} w_S(\varepsilon(J))=\frac{1}{\binom{d}{t}} \sum_{\substack{J \subseteq \N _d \\
|J|=t}} (-1)^{|S \cap J|}= \frac{1}{\binom{d}{t}} \sum_{j=0}^t \sum_{\substack{J \subseteq \N _d \\
|J|=t \\
|S \cap J|=j}} (-1)^{j} \]
\[= \frac{1}{\binom{d}{t}} \sum_{j=0}^t (-1)^j \binom{|S|}{j} \binom{d-|S|}{t-j}= \kr _t^{(d)}(|S|).
\]
Using the above we get
\[
\frac{1}{d!} \sum_{ \sigma \in Sym(d)} \prod_{i=1}^d \cos(2 \pi y_{\sigma(i)} \xi_i)= \sum_{S \subset \N _d} a_S(\xi) \kr _t^{(d)}(|S|).
\]
If $|S| \leq d/2$ then by our assumption that $t \leq d/2$ and point  \ref{thm:3.3.5} of Theorem \ref{thm:3.3} we get
\[ |\kr_t^{d}(|S|)| \leq e^{- \frac{c|S|t}{d}},\]
otherwise if $|S|>d/2$ then using reflection symmetry and point  \ref{thm:3.3.5} of Theorem \ref{thm:3.3} we obtain
\[ | \kr_t^{d}(|S|)| =| \kr_t^{d}(d-|S|)| \leq e^{- \frac{c(d-|S|)t}{d}}.
\]
This gives us
\[
\frac{1}{d!} \Big| \sum_{ \sigma \in Sym(d)} \prod_{i=1}^d \cos(2 \pi y_{\sigma(i)} \xi_i) \Big| \leq \sum_{S \subseteq \N_d} a_S(\xi)e^{- \frac{c|S|t}{d}} +\sum_{S \subseteq \N_d} a_S(\xi)e^{- \frac{c(d-|S|)t}{d}}
\]
\[=
\sum_{S \subseteq \N_d} \prod_{j \in \N _d \setminus S} \cos^2( \pi \xi_j) \cdot \prod_{j \in S} e^{-\frac{ct}{d}}\sin^2(\pi \xi_j)
+
\sum_{S \subseteq \N_d} \prod_{j \in \N _d \setminus S} e^{-\frac{ct}{d}} \cos^2( \pi \xi_j) \cdot \prod_{j \in S} \sin^2(\pi \xi_j)
\]
\[=
\prod_{j \in \N _d} \big( \cos^2( \pi \xi_j)+e^{-\frac{ct}{d}}\sin^2(\pi \xi_j) \big)+ \prod_{j \in \N _d} \big( e^{-\frac{ct}{d}}\cos^2( \pi \xi_j)+\sin^2(\pi \xi_j) \big)
\]
\[
=\prod_{j \in \N _d} \big( 1-(1-e^{-\frac{ct}{d}})\sin^2(\pi \xi_j) \big)+ \prod_{j \in \N _d} \big(1-(1-e^{-\frac{ct}{d}})\cos^2(\pi \xi_j) \big) 
\]
\[ \leq
\exp\Big( -(1-e^{-\frac{ct}{d}}) \sum_{j=1}^d \sin^2(\pi \xi_j) \Big)+\exp\Big( -(1-e^{-\frac{ct}{d}}) \sum_{j=1}^d \cos^2(\pi \xi_j) \Big) 
\]
\[
\leq \exp\Big( -\frac{ct}{2d} \sum_{j=1}^d \sin^2(\pi \xi_j) \Big)+\exp\Big( -\frac{ct}{2d} \sum_{j=1}^d \cos^2(\pi \xi_j) \Big)
.\]
In the  penultimate inequality we used inequality $1-x \leq e^{-x}$ for $x \geq 0$. In the ultimate inequality we used $x/2 \leq 1-e^{-x}$ for $0 \leq x \leq 1/2$. This concludes the proof of Lemma \ref{lem:3.4} . \end{proof}
\section{Introduction of new multipliers}
Now we will define a pair of Fourier multipliers, whose corresponding maximal operators are bounded independently of the dimension by a general theory. Then we will try to approximate $s_t$ pointwise by these multipliers.
\begin{Defn}
    \label{def:4.1} For any $\xi \in \T ^d$ and any $t \in \N _0, t \leq d$ we define
    \begin{equation}
        \lambda_t^1(\xi)= \exp\Big(-\frac{t}{d} \sum_{i=1}^d \sin^2(\pi \xi)\Big),
    \end{equation}
   \begin{equation}
        \lambda_t^2(\xi)= (-1)^t\exp\Big(-\frac{t}{d} \sum_{i=1}^d \cos^2(\pi \xi)\Big).
    \end{equation} 
\end{Defn}
Proof of the following theorem is a consequence of the theory of symmetric diffusion semigroups, see  \cite[p. 73]{Stein+1970} and \cite[Theorem 2.2]{Bourgain2020}.
\begin{Thm} \label{thm:4.2}
    For any $f \in l^2(\Z ^d)$ the following inequality holds
    \[ \| \sup_{t>0} |\mathcal{F}^{-1}(\lambda_t^1 \widehat{f})| \|_{l^2(\Z^d)} \leq 2 \|f \|_{l^2(\Z^d)}.\]
\end{Thm}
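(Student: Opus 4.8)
The plan is to recognise that $\lambda_t^1$ is the Fourier multiplier of a symmetric diffusion semigroup on $\Z^d$, and then to invoke Stein's maximal theorem for such semigroups, keeping careful track of the constant in the $l^2$ case.

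First I would introduce the probability measure $\mu$ on $\Z^d$ assigning mass $\tfrac12$ to the origin and mass $\tfrac1{4d}$ to each of the $2d$ nearest neighbours of the origin, and set $Pf=\mu*f$. Using $\cos(2\pi u)=1-2\sin^2(\pi u)$ one computes
\[
\widehat{\mu}(\xi)=\tfrac12+\tfrac1{2d}\sum_{i=1}^{d}\cos(2\pi\xi_i)=1-\tfrac1d\sum_{i=1}^{d}\sin^2(\pi\xi_i),
\]
so that $1-\widehat{\mu}(\xi)=\tfrac1d\sum_{i=1}^{d}\sin^2(\pi\xi_i)$. Put $T^{t}:=e^{-t(I-P)}=e^{-t}\sum_{k\ge0}\tfrac{t^{k}}{k!}P^{k}$; since $P^{k}f=\mu^{*k}*f$, the operator $T^{t}$ is convolution with the probability measure $\mu_{t}:=e^{-t}\sum_{k\ge0}\tfrac{t^{k}}{k!}\mu^{*k}$, and $\widehat{\mu_{t}}(\xi)=e^{-t(1-\widehat{\mu}(\xi))}=\lambda_t^1(\xi)$. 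Hence $\mathcal F^{-1}(\lambda_t^1\widehat f)=\mu_{t}*f=T^{t}f$ for all $f\in l^2(\Z^d)$, and the statement is precisely the $l^2$ maximal bound for the semigroup $\{T^{t}\}_{t\ge0}$.

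Next I would verify that $\{T^{t}\}_{t\ge0}$ is a symmetric diffusion semigroup in the sense of Stein: the identities $\mu_{0}=\delta_{0}$ and $\mu_{s+t}=\mu_{s}*\mu_{t}$ give the semigroup law, and strong continuity on $l^2(\Z^d)$ follows from $\widehat{\mu_{t}}\to1$ pointwise with $|\widehat{\mu_{t}}|\le1$ via the Parseval identity; each $\mu_{t}$ being a probability measure, $T^{t}$ is a contraction on $l^p(\Z^d)$ for every $1\le p\le\infty$ by Young's inequality, is positivity preserving, and fixes the constant function $\mathbf 1$; and $\mu_{t}$ is symmetric ($\mu_{t}(-v)=\mu_{t}(v)$), so $T^{t}$ is self-adjoint on $l^2(\Z^d)$. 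With these properties, Stein's maximal theorem for symmetric diffusion semigroups (\cite[p.~73]{Stein+1970}; see also the proof of Theorem~2.2 in \cite{Bourgain2020}) yields $\|\sup_{t>0}|T^{t}f|\|_{l^2(\Z^d)}\le A\|f\|_{l^2(\Z^d)}$ for an absolute constant $A$.

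It remains to pin down $A=2$ on $l^2$, and this is the step I expect to require the most care. One reduces to $f\ge0$ using $|T^{t}f|\le T^{t}|f|$, applies Rota's dilation theorem — legitimate because $T^{t}\mathbf 1=\mathbf 1$ — to represent $T^{2s}f$ as a conditional expectation of a reverse martingale, and concludes via Doob's $L^2$ maximal inequality (constant $2$) together with the fact that conditional expectation is an $l^2$-contraction; reparametrising $t\mapsto 2s$ shows this controls $\sup_{t>0}$. Everything preceding this — the Fourier identity for $\widehat{\mu}$, the identification $\mathcal F^{-1}(\lambda_t^1\widehat f)=T^{t}f$, and the checking of the diffusion-semigroup axioms — is routine; the only genuinely delicate point is confirming that the general machinery produces the explicit constant $2$ rather than an unspecified one, for which one simply follows the $L^2$ part of the proof of \cite[Theorem~2.2]{Bourgain2020}.
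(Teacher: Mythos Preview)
Your proposal is correct and follows exactly the route the paper points to: the paper does not give its own proof but simply cites Stein's semigroup maximal theorem \cite[p.~73]{Stein+1970} and the proof of Theorem~2.2 in \cite{Bourgain2020}, and your argument (identifying $\lambda_t^1$ as the multiplier of the heat semigroup generated by $I-P$ for the lazy simple random walk, checking the diffusion-semigroup axioms, and extracting the constant $2$ via Rota's dilation and Doob's $L^2$ inequality) is precisely the content of those references. In effect you have supplied the details the paper omits.
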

Notice that $\lambda_t^2(\xi)=(-1)^t\lambda_t^1(\xi+\frac{1}{2})$, where $\xi+\frac{1}{2}=(\xi_1 + \frac{1}{2},..., \xi_d+ \frac{1}{2})$. Because of that we have
\[
\sup_{\|f \|_{l^2(\Z ^d)}=1} \| \sup_{t \in \N, t \leq d} |\mathcal{F}^{-1}(\lambda_t^2 \widehat{f})| \|_{l^2(\Z^d)}= \sup_{\|f \|_{l^2(\Z ^d)}=1} \| \sup_{t \in \N, t \leq d} |\mathcal{F}^{-1}(\lambda_t^1 \widehat{f})| \|_{l^2(\Z^d)}.
\]
We will approximate $s_t(\xi)$ by $\lambda_t^1(\xi)$ or $\lambda_t^2(\xi)$ depending on which term in the right hand side of inequality \eqref{eq:3.2} we expect to be bigger. Define
\[ \Xi_1= \lbrace \xi \in \T^d : \sum_{i=1}^d \cos^2(\pi \xi_i) \geq d/2 \rbrace, \ \ \Xi_2= \T^d \setminus \Xi_1. \]
Then we have the following crucial inequalities.
\begin{Lem} \label{lem:4.3}
For every $t,d \in \N _0$ such that $t \leq d/2$ and every $\xi \in \T ^d$ we have
\begin{enumerate}
    \item if $\xi \in \Xi_1$, then
    \begin{equation}
        |s_t(\xi)- \lambda_t^1(\xi)| \leq 3 \min \Big\lbrace \exp\Big(- \frac{ct}{2d} \sum_{i=1}^d \sin^2( \pi \xi_i) \Big), \frac{t}{d}\sum_{i=1}^d \sin^2( \pi \xi_i) \Big\rbrace, \label{eq:4.3}
        \end{equation}
    \item if $\xi \in \Xi_2$, then
    \begin{equation}
        |s_t(\xi)- \lambda_t^2(\xi)| \leq 3 \min \Big\lbrace \exp\Big(- \frac{ct}{2d} \sum_{i=1}^d \cos^2( \pi \xi_i) \Big), \frac{t}{d}\sum_{i=1}^d \cos^2( \pi \xi_i) \Big\rbrace. \label{eq:4.4}
    \end{equation}
\end{enumerate}
\end{Lem}
\begin{proof} Take $\xi \in \Xi_1$. Then
\[ \sum_{i=1}^d \cos^2(\pi \xi_i) \geq   \sum_{i=1}^d \sin^2(\pi \xi_i).\]
Using Lemma \ref{lem:3.4} and the above, we obtain
\[ |s_t( \xi)| \leq \exp\Big(- \frac{ct}{2d} \sum_{i=1}^d \sin^2( \pi \xi_i) \Big)+\exp\Big(- \frac{ct}{2d} \sum_{i=1}^d \cos^2( \pi \xi_i) \Big) \]
\[
\leq 2 \exp\Big(- \frac{ct}{2d} \sum_{i=1}^d \sin^2( \pi \xi_i) \Big),\]
so that
\[|s_t( \xi)- \lambda_t^1(\xi)| \leq 3 \exp\Big(- \frac{ct}{2d} \sum_{i=1}^d \sin^2( \pi \xi_i) \Big). \]
On the other hand using Lemma \ref{lem:3.1} and inequality $1-e^{-x} \leq x$ we obtain
\[|s_t( \xi)- \lambda_t^1(\xi)| \leq |s_t( \xi)-1| + |1- \lambda_t^1(\xi)| \leq 3 \frac{t}{d} \sum_{i=1}^d \sin^2( \pi \xi_i).\]
This proves \eqref{eq:4.3}. \par
Now let $\xi \in \Xi_1$. Then $\xi+\frac{1}{2}=(\xi_1 + \frac{1}{2},..., \xi_d+ \frac{1}{2}) \in \Xi_2$,
$\lambda_t^2(\xi)=(-1)^t \lambda_t^1(\xi+\frac{1}{2})$ and
\[
s_t(\xi)= \frac{1}{|S_t|} \sum_{x \in S_t} e(x \cdot \xi)=  \frac{1}{|S_t|} \sum_{x \in S_t} (-1)^{\sum_{i=1}^d x_i} e(x \cdot (\xi+\frac{1}{2}))
\]
\[
=\frac{1}{|S_t|} \sum_{x \in S_t} (-1)^{\sum_{i=1}^d |x_i|} e(x \cdot (\xi+\frac{1}{2}))= (-1)^t s_t(\xi+\frac{1}{2}).
\]
By \eqref{eq:4.3} we get
\[
|s_t(\xi)-\lambda_t^2(\xi)|=|s_t(\xi+\frac{1}{2})-\lambda_t^1(\xi+\frac{1}{2})| \]
\[\leq 3 \min \Big\lbrace \exp\Big(- \frac{ct}{2d} \sum_{i=1}^d \sin^2( \pi \xi_i+\frac{\pi}{2}) \Big), \frac{t}{d}\sum_{i=1}^d \sin^2( \pi \xi_i+\frac{\pi}{2}) \Big\rbrace
\]
\[
= 3 \min \Big\lbrace \exp\Big(- \frac{ct}{2d} \sum_{i=1}^d \cos^2( \pi \xi_i) \Big), \frac{t}{d}\sum_{i=1}^d \cos^2( \pi \xi_i) \Big\rbrace.
\]

\end{proof}
\section{Conclusions and proof of the main theorem}
We are almost ready to prove the main theorem. The next lemma is simple, however we give a proof to calculate the explicit constant $\frac{5}{3}$.
\begin{Lem}
    \label{lem:5.1} 
    For any $x \in \mathbb{R}_+$ we have
    \[ \sum_{k=0}^{\infty} \min \lbrace 4^kx, (4^kx)^{-1} \rbrace \leq \frac{5}{3} \]
\end{Lem}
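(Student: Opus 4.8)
The plan is to compare, for each index $k$, the two competing quantities $4^k x$ and $(4^k x)^{-1}$, to split the series at the point where they cross, and to estimate each of the two resulting tails by a geometric series. Since the sum ranges only over $k \geq 0$, the ``ascending'' part of the series is empty exactly when $x \geq 1$, so I would peel that case off first.

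For $x \geq 1$ one has $4^k x \geq 1$ for every $k \geq 0$, hence $\min\{4^k x,(4^k x)^{-1}\} = 4^{-k}x^{-1} \leq 4^{-k}$, and summing the geometric series gives a bound of $\tfrac{4}{3} \leq \tfrac{8}{3}$. For $0 < x < 1$ I would let $n \geq 0$ be the largest integer with $4^n x \leq 1$ (this exists because $4^0 x = x < 1$). For $0 \leq k \leq n$ the minimum equals $4^k x$, and from $4^n x \leq 1$ we get $4^k x \leq 4^{k-n}$, so $\sum_{k=0}^n 4^k x \leq \sum_{j=0}^n 4^{-j} < \tfrac{4}{3}$. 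For $k \geq n+1$ maximality of $n$ forces $4^k x > 1$, so the minimum equals $4^{-k}x^{-1}$; using $x^{-1} < 4^{n+1}$ (which follows from $4^{n+1}x > 1$) we obtain $\sum_{k=n+1}^{\infty} 4^{-k}x^{-1} \leq 4^{n+1}\sum_{k=n+1}^{\infty}4^{-k} = \tfrac{4}{3}$. Adding the two pieces yields the claimed bound $\tfrac{8}{3}$.

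There is essentially no obstacle: the only point that needs a little care is the definition of the splitting index $n$ and the remark that the ascending part of the sum degenerates precisely in the range $x \geq 1$, which is why that range is handled separately. Everything else reduces to summing two geometric series, and the estimate is comfortably loose (one could in fact replace $\tfrac 83$ by $\tfrac 53$), so $\tfrac 83$ is obtained with room to spare.
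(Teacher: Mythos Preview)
Your proof is correct and follows essentially the same approach as the paper: separate the case $x\ge 1$ (where only the descending geometric tail appears) from $0<x<1$, in the latter split at the crossover index $n=\lfloor\log_4(1/x)\rfloor$, and bound each of the two geometric pieces by $\tfrac{4}{3}$. The only cosmetic differences are that the paper writes the splitting index explicitly as $\lfloor\log_4(1/x)\rfloor$ and sums the finite geometric series exactly before estimating, whereas you bound termwise first; the arithmetic is the same.
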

\begin{proof}
    If $x>1$ then 
    \[ \sum_{k=0}^{\infty} \min \lbrace 4^kx, (4^kx)^{-1} \rbrace= \sum_{k=0}^{\infty} 4^{-k} x^{-1}=\frac{4}{3}x^{-1}<\frac{4}{3}. \] 
    If $x \leq 1$, then let $k \in \N_0$ be such that $4^kx=s \in ( \frac{1}{4},1]$. This gives us
    \[
    \sum_{n=0}^{\infty} \min \lbrace 4^nx, (4^nx)^{-1} \rbrace= \sum_{n=0}^k 4^nx+ \sum_{n=k+1}^{\infty} (4^{n}x)^{-1}
    \]
    \[
    = \sum_{n=0}^k 4^{n-k}s+ \sum_{n=k+1}^{\infty} (4s)^{-1} \cdot 4^{k+1-n} \leq (s+ \frac{1}{4s}) \cdot \sum_{n=0}^{\infty} 4^{-n}=\frac{4}{3} (s+\frac{1}{4s}) \leq \frac{5}{3},
    \]
    above we used the fact that function $(\frac{1}{4},1] \ni s \mapsto s+\frac{1}{4s}$ is bounded by $\frac{5}{4}$.
\end{proof}
We are finally ready to prove Theorem \ref{thm:1.3}.
\begin{proof}
    Take any $f \in l^2(\Z ^d)$ we have that
    \[
    \| \sup_{t \in \D, t \leq \sqrt{d}} |\mathcal{M}_t f| \|_{l^2(\Z ^d)} \leq \| \sup_{t \in \D, t \leq \sqrt{d}} |(\mathcal{M}_t-\mathcal{S}_t)f| \|_{l^2(\Z ^d) } + \| \sup_{t \in \D, t \leq \sqrt{d}} |\mathcal{S}_tf| \|_{l^2(\Z ^d) }.
    \]
   By Lemma \ref{lem:2.6} the first term above is bounded by $6\|f \|_{l^2}$. For the second term let $f=f_1+f_2$ with $\widehat{f_1}(\xi)=\widehat{f}(\xi) \mathds{1}_{\Xi_1}(\xi)$ and  
   $\widehat{f_2}(\xi)=\widehat{f}(\xi) \mathds{1}_{\Xi_2}(\xi)$. Then we have
   \[
   \| \sup_{t \in \D, t \leq \sqrt{d}} |\mathcal{S}_tf| \|_{l^2(\Z ^d) }= \| \sup_{t \in \D, t \leq \sqrt{d}} |\mathcal{F}^{-1}(s_t\widehat{f})| \|_{l^2(\Z ^d) }  \]
   \[ \leq
   \| \sup_{t \in \D, t \leq \sqrt{d}} |\mathcal{F}^{-1}(s_t\widehat{f_1})| \|_{l^2(\Z ^d) }
   + \| \sup_{t \in \D, t \leq \sqrt{d}} |\mathcal{F}^{-1}(s_t\widehat{f_2})| \|_{l^2(\Z ^d) } 
   \]
   \[ \leq
    \sum_{i=1}^2 \| \sup_{t \in \D, t \leq \sqrt{d}} |\mathcal{F}^{-1}(\lambda_t^i\widehat{f_i})| \|_{l^2(\Z ^d) }+
    \sum_{i=1}^2 \| \sup_{t \in \D, t \leq \sqrt{d}} |\mathcal{F}^{-1}\big((s_t-\lambda_t^i)\widehat{f_i}\big)| \|_{l^2(\Z ^d) }.
   \]
    By Theorem \ref{thm:4.2} the first sum is bounded  by
    \[ 2\| f_1 \|_{l^2(\Z ^d) }+2\| f_2 \|_{l^2(\Z ^d) } \leq 2\sqrt{2} \big( \| f_1 \|_{l^2(\Z^d)}^2 + \|f_2 \|_{l^2(\Z^d)}^2 \big)^{1/2}= 2\sqrt{2} \| f \|_{l^2(\Z^d)}. \]
    To finish the proof it remains to estimate for $i=1,2$ the following term
    \[
     \| \sup_{t \in \D, t \leq \sqrt{d}} |\mathcal{F}^{-1}\big((s_t-\lambda_t^i)\widehat{f_i}\big)| \|_{l^2(\Z ^d) }.
    \]
    We will only bound the above for $i=1$, the proof for $i=2$ is basically the same.
    From Lemma \ref{lem:4.3} we have that if $\xi \in \Xi_1$, then
    \[
    |s_t(\xi)-\lambda_t^1(\xi)| \leq 3 \min \Big\lbrace \exp\Big(- \frac{ct}{2d} \sum_{i=1}^d \sin^2( \pi \xi_i) \Big), \frac{t}{d}\sum_{i=1}^d \sin^2( \pi \xi_i) \Big\rbrace 
    \]
    \[ \leq
3\min \Big\lbrace \Big(\frac{ect}{2d} \sum_{i=1}^d \sin^2( \pi \xi_i) \Big)^{-1}, \frac{t}{d}\sum_{i=1}^d \sin^2( \pi \xi_i) \Big\rbrace \]
    \[
    \leq \frac{6}{ec} \min \Big\lbrace \Big(\frac{t}{d} \sum_{i=1}^d \sin^2( \pi \xi_i) \Big)^{-1}, \frac{t}{d}\sum_{i=1}^d \sin^2( \pi \xi_i) \Big\rbrace.
    \]
    Using Lemma $\ref{lem:5.1}$ for any $\xi \in \Xi_1$ we obtain
    \[
    \sum_{t \in \D, t \leq \sqrt{d}} |s_t(\xi)-\lambda_t^1(\xi)|^2 \leq \Big( \frac{6}{ec} \Big)^2 \cdot
    \sum_{t \in \D, t \leq \sqrt{d}} \min \Big\lbrace t^{-2}\Big(\frac{1}{d} \sum_{i=1}^d \sin^2( \pi \xi_i) \Big)^{-2}, t^2 \Big(\frac{1}{d}\sum_{i=1}^d \sin^2( \pi \xi_i) \Big)^2 \Big\rbrace 
    \]
    \[
    \leq \Big( \frac{6}{ec} \Big)^2 \cdot \sum_{k=0}^{\infty} \min \Big\lbrace 4^{-k} \Big(\frac{1}{d} \sum_{i=1}^d \sin^2( \pi \xi_i) \Big)^{-2}, 4^k \Big(\frac{1}{d}\sum_{i=1}^d \sin^2( \pi \xi_i) \Big)^2 \Big\rbrace \leq \Big( \frac{6}{ec} \Big)^2 \cdot \frac{5}{3}.
    \]
Finally the following is true
    \[
    \| \sup_{t \in \D, t \leq \sqrt{d}} |\mathcal{F}^{-1}\big((s_t-\lambda_t^1)\widehat{f_1}\big)| \|_{l^2(\Z ^d) } \leq \| \Big( \sum_{t \in \D, t \leq \sqrt{d}} |\mathcal{F}^{-1}\big((s_t-\lambda_t^1)\widehat{f_1}\big)|^2 
 \Big)^{1/2}\|_{l^2(\Z ^d) }
    \]
    \[=
     \Big( \sum_{x \in \Z ^d} \sum_{t \in \D, t \leq \sqrt{d}} |\mathcal{F}^{-1}\big((s_t-\lambda_t^1)\widehat{f_1}\big)(x)|^2 
 \Big)^{1/2}=
 \Big(  \sum_{t \in \D, t \leq \sqrt{d}} \sum_{x \in \Z ^d} |\mathcal{F}^{-1}\big((s_t-\lambda_t^1)\widehat{f_1}\big)(x)|^2 
 \Big)^{1/2}
    \]
    \[=
    \Big(  \sum_{t \in \D, t \leq \sqrt{d}} \| |\mathcal{F}^{-1}\big((s_t-\lambda_t^1)\widehat{f_1}\big)| \|_{l^2(\Z ^d)}^2 
 \Big)^{1/2} \]
 \[=
 \Big(  \sum_{t \in \D, t \leq \sqrt{d}}  \int_{\T^d} |s_t(\xi)-\lambda_t^1(\xi)|^2 |\widehat{f}(\xi)|^2 \cdot\mathds{1}_{\Xi_1}(\xi) d \xi
 \Big)^{1/2}\]
 \[ \leq \frac{6}{ec} \cdot \sqrt{\frac{5}{3}}
 \Big(    \int_{\T^d}  |\widehat{f}(\xi)|^2 \cdot\mathds{1}_{\Xi_1}(\xi) d \xi \Big)^{1/2} 
    = \frac{6}{ec} \cdot \sqrt{\frac{5}{3}} \| \widehat{f_1} \|_{L^2(\T^d)}= \frac{6}{ec} \cdot \sqrt{\frac{5}{3}} \| f_1 \|_{l^2(\Z ^d)}.
    \]
    Similarly we obtain
    \[
     \| \sup_{t \in \D, t \leq \sqrt{d}} |\mathcal{F}^{-1}\big((s_t-\lambda_t^2)\widehat{f_2}\big)| \|_{l^2(\Z ^d) } \leq \frac{6}{ec} \cdot \sqrt{\frac{5}{3}} \| f_2 \|_{l^2(\Z ^d)}.
    \]
    Which gives us
    \[
    \sum_{i=1}^2 \| \sup_{t \in \D, t \leq \sqrt{d}} |\mathcal{F}^{-1}\big((s_t-\lambda_t^i)\widehat{f_i}\big)| \|_{l^2(\Z ^d) } \leq \]
    \[
    \frac{6}{ec} \cdot \sqrt{\frac{5}{3}} \cdot \Big( \| f_1 \|_{l^2(\Z ^d)} + \| f_2 \|_{l^2(\Z ^d)} \Big) \leq \frac{6}{ec} \cdot \sqrt{\frac{10}{3}} \|f \|_{l^2(\Z ^d)}.
    \]
    Combining all of the estimates we finally get
    \[
    \| \sup_{t \in \D, t \leq \sqrt{d}} |\mathcal{M}_t f| \|_{l^2(\Z ^d)} \leq \| \sup_{t \in \D, t \leq \sqrt{d}} |(\mathcal{M}_t-\mathcal{S}_t)f| \|_{l^2(\Z ^d) } + \| \sup_{t \in \D, t \leq \sqrt{d}} |\mathcal{S}_tf| \|_{l^2(\Z ^d) }
    \]
    \[
    \leq \| \sup_{t \in \D, t \leq \sqrt{d}} |(\mathcal{M}_t-\mathcal{S}_t)f| \|_{l^2(\Z ^d) }+ \sum_{i=1}^2 \| \sup_{t \in \D, t \leq \sqrt{d}} |\mathcal{F}^{-1}(\lambda_t^i\widehat{f_i})| \|_{l^2(\Z ^d) } \]
    \[
    +
    \sum_{i=1}^2 \| \sup_{t \in \D, t \leq \sqrt{d}} |\mathcal{F}^{-1}\big((s_t-\lambda_t^i)\widehat{f_i}\big)| \|_{l^2(\Z ^d) }
    \]
    \[
    \leq \|f \|_{l^2(\Z ^d)} \Big( 6+ 2\sqrt{2}+ \frac{6}{ec} \cdot \sqrt{\frac{10}{3}} \Big) \leq 40 \|f \|_{l^2(\Z ^d)}.
    \] 
    This completes the proof of Theorem \ref{thm:1.3}.
\end{proof}

\subsection*{Acknowledgements}
Author was supported by National Science Centre, Poland, grant Sonata Bis nr. 2022/46/E/ST1/00036. This paper constitutes author's master thesis. The author is grateful to his advisor Błażej Wróbel for suggesting the topic and helpful remarks during the preparation of this paper. The author is also grateful to the referee for careful reading of the paper and pointing out that arguments in the section 4 can be simplified.

\normalsize

\end{document}